\DeclareMathAlphabet{\mathcal}{OMS}{cmsy}{m}{n}
\theoremstyle{plain}
\newtheorem {theorem}{Theorem}[section]
\newtheorem {lemma}[theorem]{Lemma}
\newtheorem {corollary} [theorem]{Corollary}
\newtheorem {proposition} [theorem]{Proposition}
\theoremstyle{definition}
\newtheorem{example}[theorem]{Example}
\theoremstyle{remark}
\newtheorem{remark}[theorem]{Remark}
 \numberwithin{equation}{section} 
\newcommand{\R}{\mathbb{R}}
\newcommand{\N}{\mathbb{N}}
\newcommand{\D}{\mathcal{D}}
\newcommand{\oo}{\infty}
\newcommand{\e}{\varepsilon}
\newcommand{\s}{\sigma}
\newcommand{\w}{\omega}
\renewcommand{\d}{\partial}
\renewcommand{\a}{\alpha}
\renewcommand{\b}{\beta}
\newcommand{\g}{\gamma}
\newcommand{\de}{\delta}
\newcommand{\De}{\Delta}
\newcommand{\1}{{-1}}
\newcommand{\cut}{\mathrm{cut}}
\newcommand{\cor}{\mathrm{rect}}
\newcommand{\vspan}{\mathrm{span}}
\newcommand{\be}{\begin{equation}}
\newcommand{\ee}{\end{equation}}
\renewcommand{\k}{\mathrm k}
\definecolor{deepgreen}{rgb}{0.0,0.5,0.0} 
\renewcommand{\k}{\kappa}
\newcommand{\df}{\mathrm{d}}
\newcommand{\arbitraryconstant}{{c}}
\title[Strictly abnormal geodesics with a degeneracy point]{Strictly abnormal geodesics with a degeneracy point in the interior of their domain}
\author[Nicola Paddeu]{Nicola Paddeu$^\dagger$}
\address{$^\dagger$ University of Fribourg, Chemin du Mus\'ee~23, 1700 Fribourg, Switzerland}
\email{{nicola.paddeu@unifr.ch}}
\author[Alessando Socionovo]{Alessandro Socionovo$^\star$}
\address{$^\star$ Laboratoire Jacques-Louis Lions, Sorbonne Université, Université de Paris, CNRS, Inria, Paris, France}
\email{alessandro.socionovo@sorbonne-universite.fr}
\begin{document}

\begin{abstract}
In this article, we study abnormal curves in a family of sub-Riemannian manifolds of rank 2. We focus on abnormal curves whose lifts to the cotangent bundle annihilate, at an interior point of the domain, all Lie brackets of length up to three of vector fields tangent to the distribution. We present a method to prove that such curves are length-minimizing. Finally, we prove that strictly abnormal geodesics may cease to be locally length-minimizing after a change of the metric.
\end{abstract}

\maketitle


\section{Introduction}

Sub-Riemannian manifolds are metric spaces equipped with a distance that arises from a smooth Riemannian metric defined on a smooth distribution of the manifold (see \cite{ABB20}, \cite{libro_Enrico} or \cite{Mon02} for a detailed presentation of sub-Riemannian manifolds). One of the most difficult and studied problems in sub-Riemannian geometry 
is the regularity of \emph{geodesics}, i.e., the regularity of isometric embedding of intervals.

The Pontryagin Maximum Principle provides necessary conditions for a curve to be a geodesic, and curves that satisfy these necessary conditions are called {\em extremal curves}. There are two non-disjoint classes of extremal curves, named {\em normal} and {\em abnormal}. Normal curves are solutions of a smooth ODE that involves the metric, and they are smooth and locally geodesics. Instead,  abnormal curves do not depend on the metric and they may be neither smooth nor locally geodesics. Hence, the regularity of geodesics reduces to the regularity of non-normal geodesics. Consequently, abnormal curves have become one of the main topics of research in sub-Riemannian geometry, with a particular attention to {\em strictly abnormal} curves, namely, curves that are abnormal but not normal. 

The first example of a strictly abnormal geodesic was discovered by Montgomery in \cite{Mon94}, and it was later generalized in \cite{AS96,LS95}. All these curves are both smooth and locally geodesics regardless of the choice of the metric, and for a long time they constituted the only known examples of strictly abnormal geodesics. Recently, a new class of examples was discovered in \cite{CJMRSSS25, S25}, where the authors show the existence of strictly abnormal geodesics that lose regularity at a boundary point of their domain. All these examples appear in sub-Riemannian manifolds with 2-dimensional distribution.

Apart from these few models, the literature lacks examples of strictly abnormal geodesics, as well as methods to prove that particular abnormal curves are of minimal length.  The results obtained in \cite{AS96,LS95} are based on a non-degeneracy assumption on the distribution along the strictly abnormal curve. Namely, let $M$ be a sub-Riemannian manifold and $X_1,X_2$ be a frame for the distribution. An abnormal curve $\gamma:I {\;\color{black}\subset\R}\to M$ with lift $\lambda:I\to T^*M$ (see \cite[Section 3.4]{ABB20} for more details about lifts of extremal curves in the cotangent bundle) is \textit{non-degenerate} at $t\in I$ if 
\begin{equation}
\label{eq:non-degenerate-assumption}
    \max\{|\langle \lambda(t), [X_1,[X_1,X_2]]\rangle |, |\langle \lambda(t), [X_2,[X_1,X_2]]\rangle | \}\neq 0.
\end{equation}
Curves satisfying the condition \eqref{eq:non-degenerate-assumption} for all times are known as \emph{regular abnormal} or \emph{nice abnormal}, and they are the object of study of \cite{AS96, LS95}. Moreover, the non-degeneracy assumption \eqref{eq:non-degenerate-assumption} is satisfied by the strictly abnormal geodesics presented in \cite{CJMRSSS25} at every point in the interior of their domain, although it is not satisfied at a boundary point. Finding strictly abnormal geodesics that violate the non-degeneracy condition \eqref{eq:non-degenerate-assumption} at an interior point of their domain may be the first step towards proving the non-regularity of sub-Riemannian geodesics at an interior point, see \cite[Theorem~3.2]{AS96}.  

In this article, we present a broad class of new examples of strictly abnormal geodesics that do not satisfy the non-degeneracy assumption \eqref{eq:non-degenerate-assumption} at an interior point of their domain. As a consequence, we give a negative
answer the following natural question: does strictly abnormal geodesics remain locally geodesic for every choice of the metric? Notice that, for the nice abnormal minimizers studied in \cite{AS96, LS95}, this question has been positively answered, see \cite{LS95}. 

Let us introduce the sub-Riemannian manifolds containing the new examples of strictly abnormal geodesics we intend to present. Let $M$ be the manifold defined by
\begin{equation}
    \label{eq:manifold}
    M:=(-\frac{1}{2},\frac{1}{2})\times (-1,1)\times \R \subset\R^3,
\end{equation}
with coordinates $x=(x_1,x_2,x_3)\in M$. For $b\in \N$, we consider the vector fields
\begin{equation}
    \label{eq:horvfintro}
    X_1(x):=\partial_1 \quad \text{and} \quad X_2(x):=\partial_2 + x_1^2x_2^b\partial_3, \quad \text{for all } x\in M,
\end{equation}
and we equip $M$ with the distribution 
\begin{equation}
    \label{eq:distribution}
    \D:=\mathrm{span}\{X_1,X_2\}.
\end{equation} 
We define the curve $\g:(-1,1)\to M$ by setting
\begin{equation}
    \label{eq:gammaintro}
    \g(t):=(0,t,0), \quad \text{for all } t\in (-1,1).
\end{equation}
The curve $\g$ is abnormal in $(M,\D)$ for all $b\in \N$, see Section 2. Since
\begin{equation}
    [X_i,[X_1,X_2]](\g(0))=0, \quad \text{for all } i=1,2,
\end{equation}
the non-degeneracy assumption \eqref{eq:non-degenerate-assumption} is not satisfied at $t=0$, which is an interior point of the domain of $\g$.

For $a\in\N$ and for a function $\a\in C^\infty((-1,1)),[0,1])$, we equip the distribution $\D$ with the Riemannian metric $g$ defined by setting
\begin{equation}
    \label{eq:metricaintro}
    g_x(X_1,X_1)=1, \quad g_x(X_1,X_2)=0, \quad g_x(X_2,X_2)=1-\a(x_2)x_1x_2^a,
\end{equation}
for all $x\in M$. In this way, we have a family of sub-Riemannian manifolds $(M,\D,g)$ depending on the parameters $a,b\in \N$  and on the function $\a$. 

The main result of this article is the following.

\begin{theorem}
    \label{thm:esempio-dipendenza-metrica}
    For $a,b\in \N$ and $\a\in C^\oo((-1,1),[0,1])$, let $(M,\D,g)$ be the sub-Riemannian manifold defined by \eqref{eq:manifold}, \eqref{eq:distribution}, and \eqref{eq:metricaintro}. Let $\gamma:(-1,1)\to M$ be the curve defined by \eqref{eq:gammaintro}. If $b\leq 2a$ and $b$ is even, then there exists $\e>0$ such that for all $\a\in C^\oo((-1,1),[0,1])$ the curve $\gamma|_{[-\e,\e]}$ is a geodesic. 
\end{theorem}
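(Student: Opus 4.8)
The plan is to show that every horizontal competitor $\sigma=(\sigma_1,\sigma_2,\sigma_3)$ joining $\gamma(-\varepsilon)=(0,-\varepsilon,0)$ to $\gamma(\varepsilon)=(0,\varepsilon,0)$ has length at least $L(\gamma)=2\varepsilon$ (the value $2\varepsilon$ being clear, since $\sigma_1\equiv 0$ along $\gamma$ gives $g(\dot\gamma,\dot\gamma)=1$). Along horizontal curves $\dot\sigma_3=\dot\sigma_2\,\sigma_1^2\sigma_2^b$, and $\sigma_3$ returns to $0$, so the problem reduces to a planar one for $(\sigma_1,\sigma_2)$ subject to the single integral constraint $\int_0^T\dot\sigma_2\,\sigma_1^2\sigma_2^b\,ds=0$, with length $L(\sigma)=\int_0^T P\,ds$, where $P:=\sqrt{\dot\sigma_1^2+\dot\sigma_2^2(1-\alpha(\sigma_2)\sigma_1\sigma_2^a)}$. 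First I record an a priori bound: on $M$ one has $1-\alpha\sigma_1\sigma_2^a\geq 1/2$, so for any competitor with $L(\sigma)\leq 2\varepsilon$ the total variations of $\sigma_1$ and of $\sigma_2$ are $O(\varepsilon)$; hence $\sup|\sigma_1|,\sup|\sigma_2|=O(\varepsilon)$ and the perturbation $h:=\alpha\sigma_1\sigma_2^a$ is uniformly $O(\varepsilon^{1+a})$, uniformly in $\alpha$ since $\|\alpha\|_\infty\leq 1$.

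The core of the argument is the elementary identity
\[
L(\sigma)=\int_0^T|\dot\sigma_2|\,ds+\int_0^T\frac{\dot\sigma_1^2-h\,\dot\sigma_2^2}{P+|\dot\sigma_2|}\,ds,\qquad \int_0^T|\dot\sigma_2|\,ds=2\varepsilon+E,
\]
where $E:=2\int_{\{\dot\sigma_2<0\}}|\dot\sigma_2|\,ds\geq 0$ measures the backtracking of $\sigma_2$. Thus it suffices to prove that the metric saving $S:=\int_0^T h\,\dot\sigma_2^2/(P+|\dot\sigma_2|)\,ds$ is dominated by the cost of moving in $\sigma_1$ plus the excess, namely $E+\int_0^T \dot\sigma_1^2/(P+|\dot\sigma_2|)\,ds\geq S$.

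The hypothesis that $b$ is even is used in the monotone case: if $E=0$ then $\dot\sigma_2\geq 0$ a.e., the constraint reads $\int_0^T\sigma_1^2\sigma_2^b\,\dot\sigma_2\,ds=0$ with nonnegative integrand, and this forces $\sigma_1\equiv 0$, i.e.\ $\sigma=\gamma$ and $L(\sigma)=2\varepsilon$. For the general case I would bound $S\leq\int_0^T\alpha|\sigma_1||\sigma_2|^a|\dot\sigma_2|\,ds$. The hypothesis $b\leq 2a$ (with $b$, hence $2a-b$, even) is exactly what allows the splitting $|\sigma_1||\sigma_2|^a\leq\tfrac{\lambda}{2}\sigma_1^2\sigma_2^b+\tfrac{1}{2\lambda}|\sigma_2|^{2a-b}$ into a term controlled by the constraint and a \emph{nonnegative} remainder of bounded order $|\sigma_2|^{2a-b}=O(\varepsilon^{2a-b})$. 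Using the constraint in the form $\int_0^T\sigma_1^2\sigma_2^b|\dot\sigma_2|\,ds=2\int_{\{\dot\sigma_2<0\}}\sigma_1^2\sigma_2^b|\dot\sigma_2|\,ds\leq(\sup\sigma_1^2\sigma_2^b)\,E$, the first term is absorbed into $E$ for $\varepsilon$ small; the remainder must then be absorbed into the cost $\int_0^T\dot\sigma_1^2/(P+|\dot\sigma_2|)\,ds$. For this I would integrate by parts, writing $\sigma_1\sigma_2^a\dot\sigma_2=\tfrac{1}{a+1}\frac{d}{ds}(\sigma_2^{a+1})\,\sigma_1$, moving the derivative onto $\sigma_1$ (the global boundary terms vanish because $\sigma_1=0$ at the endpoints), and applying Cauchy--Schwarz against the weight $P+|\dot\sigma_2|$.

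The main obstacle is precisely this last absorption. A crude application of the arithmetic–geometric inequality with fixed $\lambda$ leaves a saving-independent remainder of size $O(\varepsilon^{2a+3})$, which is \emph{not} controlled by $E$ or by the $\dot\sigma_1$-cost when both are small, so it would only yield $L(\sigma)\geq 2\varepsilon-O(\varepsilon^{2a+3})$. The estimate must therefore be arranged so that every contribution to $S$ is genuinely tied either to the backtracking $E$ (through the constraint, where $b$ even and the quadratic weight $\sigma_2^b$ enter) or to $\int_0^T\dot\sigma_1^2/(P+|\dot\sigma_2|)\,ds$ (through integration by parts, where the linear weight $\sigma_2^a$ enters); balancing these two mechanisms is what forces $b\leq 2a$. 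Controlling the boundary terms produced by the integration by parts when $\sigma_2$ is not monotone—so that the inequality holds for arbitrary, possibly highly oscillating competitors, and uniformly in $\alpha$—is the crux, and I expect it to require a Poincaré/Hardy-type estimate adapted to the vanishing of both weights at the degeneracy point $\sigma_2=0$.
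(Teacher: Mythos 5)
Your setup is sound: the reduction to a planar problem with the single constraint $\int\dot\sigma_2\,\sigma_1^2\sigma_2^b\,ds=0$, the decomposition $L(\sigma)=2\e+E+\int(\dot\sigma_1^2-h\dot\sigma_2^2)/(P+|\dot\sigma_2|)\,ds$, and the identification of where ``$b$ even'' and ``$b\le 2a$'' must enter are all correct, and your treatment of the monotone case $E=0$ is fine. But the proof has a genuine gap at exactly the point you flag: the absorption of the metric saving $S$ is never established, and the route you propose cannot establish it. After the splitting $|\sigma_1||\sigma_2|^a\le\tfrac{\lambda}{2}\sigma_1^2\sigma_2^b+\tfrac{1}{2\lambda}|\sigma_2|^{2a-b}$, the remainder $\tfrac{1}{2\lambda}\int|\sigma_2|^{2a-b}|\dot\sigma_2|\,ds$ contains no factor of $\sigma_1$ at all: it is a fixed positive quantity of order $\e^{2a-b+1}$ for \emph{every} competitor, including ones arbitrarily close to $\gamma$, so it cannot be absorbed into $E$ or into $\int\dot\sigma_1^2/(P+|\dot\sigma_2|)\,ds$, both of which vanish on $\gamma$. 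No choice of $\lambda$, and no integration by parts applied to a term that does not contain $\sigma_1$, can recover the sharp bound $L(\sigma)\ge 2\e$ with equality only for reparametrizations of $\gamma$; you diagnose this correctly but the Poincar\'e/Hardy-type estimate you then invoke is left entirely unspecified, so the argument does not close.

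The paper closes this gap by a different device, generalizing Liu--Sussmann. It introduces $J:=\int_0^\tau\psi(\w)\,dt$ with $\psi=x_1^2x_2^b$ and the sup-type quantity $\b:=\max_{[0,\tau]}|\w_1\w_2^{b/2}|$, and proves two estimates whose powers of $\b$ are mismatched. From above, writing $J=\int(1-u_2\phi^{1/2})\psi+\int u_2\phi^{1/2}\psi$, using that $1-u_2\phi^{1/2}\ge 0$ under arc-length parametrization, the endpoint constraint, and $b\le 2a$ (so that $|\w_2|^a\le|\w_2|^{b/2}$ and the metric corrections are $O(\b)$ and $O(\b^3)$), one gets $J\le\b^2(\tau-2\e)+2\b^3\tau$. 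From below, since $b$ is even one has $\psi\ge0$, and the Lipschitz bound $|\tfrac{d}{dt}(\w_1\w_2^{b/2})|\le 1$ shows that $|\w_1\w_2^{b/2}|\ge\b/2$ on an interval of length $\b$, whence $J\ge\b^3/8$. Comparing the two gives $\tau-2\e\ge\b(\tfrac18-2\tau)$, which for $\tau\le 2\e$ with $\e$ small forces $\b=0$, i.e.\ $\w$ is a reparametrization of $\g$. The idea you are missing is precisely this: measure the competitor by the sup $\b$ rather than by integral quantities, so that nonnegativity of $\psi$ converts a single large value of $\w_1\w_2^{b/2}$ into a lower bound $\b^3/8$ for $J$ that beats the upper bound $O(\b^2(\tau-2\e))+O(\b^3\e)$ term by term, with no remainder left to absorb.
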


The proof of Theorem~\ref{thm:esempio-dipendenza-metrica} relies on a partial generalization of the techniques developed in \cite[Section 7.1]{LS95}. In fact, we prove similar estimates as in \cite[Section 7.1]{LS95} without requiring the non-degeneracy assumption \eqref{eq:non-degenerate-assumption}. When the metric satisfies the assumptions of Theorem \ref{thm:esempio-dipendenza-metrica}, we use these estimates to prove that the curve $\g|_{[-\e,\e]}$ is locally geodesic. 

\medskip

We are interested in the case where the abnormal geodesics provided by Theorem \ref{thm:esempio-dipendenza-metrica} are 
strictly abnormal. For this reason, we provide a criterion that establish whether the curve $\g$ defined by \eqref{eq:gammaintro} is normal on a subinterval of its domain. Namely, we show that for $t_1,t_2\in(-1,1)$, with $t_1<t_2$, the curve $\gamma|_{[t_1,t_2]}$ is normal if and only if $\alpha(t)=0$ for all $t\in[t_1,t_2]$, see Corollary \ref{thm:abnormal-normal}. The idea behind this criterion is to  show that $\g$ is normal on a sub-interval of its domain if and only if its projection to a suitable Riemannian manifold is a geodesic on that sub-interval, see Proposition~\ref{prop:quando-normal}.

We then investigate whether the strictly abnormal geodesics provided by Theorem \ref{thm:esempio-dipendenza-metrica} remain locally geodesic after a change of the Riemannian metric. 

\begin{theorem}
    \label{thm:non-minimality}
    For $a,b\in \N$ and $\a\in C^\oo((-1,1),[0,1])$, let $(M,\D,g)$ be the sub-Riemannian manifold defined by \eqref{eq:manifold}, \eqref{eq:distribution}, and \eqref{eq:metricaintro}. Let $\gamma:(-1,1)\to M$ be the curve defined by \eqref{eq:gammaintro}. If $\alpha\equiv 1$ and $b>4a+4$, then the curve $\g$ is not locally geodesic. 
\end{theorem}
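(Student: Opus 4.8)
The plan is to prove Theorem~\ref{thm:non-minimality} by exhibiting, for $\alpha\equiv 1$ and $b>4a+4$, a competitor curve that joins the same endpoints as a short piece of $\g$ but has strictly smaller length. The natural strategy is a \emph{cut-and-replace} argument: on a small interval $[-\e,\e]$ the curve $\g$ travels with unit horizontal velocity along the $x_2$-direction, accumulating length essentially equal to $2\e$ because $g_x(X_2,X_2)=1-x_1x_2^a$ evaluated on $\g$ (where $x_1=0$) is exactly $1$. The key observation is that the metric is \emph{cheaper} off the curve: for $x_1>0$ and $x_2^a>0$ we have $g_x(X_2,X_2)=1-x_1x_2^a<1$, so moving in the $X_2$-direction at a location with $x_1>0$ costs less per unit parameter than moving along $\g$. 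Thus I would try to construct a detour that bulges into the region $x_1>0$, pays a small price to enter and leave that region via the $X_1$-direction, but saves a larger amount by exploiting the reduced metric on the long middle stretch.

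First I would set up the variational competitor explicitly. I would replace $\g|_{[-\e,\e]}$ by a loop that: goes from $(0,-\e,0)$ outward in $x_1$ to some small displacement $x_1=\delta$, then proceeds in the $x_2$-direction to $(\delta,\e,\cdot)$, then returns to $x_1=0$. The subtlety is the $x_3$-constraint: the horizontal motion in $X_2$ produces a drift in $x_3$ equal to $\int x_1^2 x_2^b\,dt$, so the detour will generically fail to close up in the $x_3$-coordinate. I would therefore need the competitor to be a genuine horizontal curve with the same endpoints, which forces the $x_3$-holonomy of the loop to vanish. The cleanest way to enforce this is to let $\delta$ and the profile of the detour depend on $\e$ and to add a correcting maneuver; concretely I expect to parametrize the competitor by an amplitude $\delta=\delta(\e)$ chosen so that the two competing effects can be compared at leading order in $\e$.

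The heart of the estimate is a length comparison at leading order in $\e$. Along $\g$ the length is $2\e$. For the competitor, the length gained by the two radial $X_1$-excursions scales like the displacement $\delta$ in $x_1$ (entering and exiting), while the length saved on the middle stretch scales like $\int_{-\e}^{\e} x_1 x_2^a\,dx_2 \sim \delta\,\e^{a+1}$ when $x_1\approx\delta$ there. The crucial constraint is the $x_3$-holonomy: the drift $\int x_1^2 x_2^b\,dt\sim \delta^2\e^{b+1}$ must be cancelled, and cancelling it costs additional length. Balancing the entrance/exit cost against the metric saving, and separately against the holonomy-correction cost, is where the exponent condition $b>4a+4$ must enter: it should guarantee that the $x_3$-drift produced by the detour is negligible compared to the metric saving, so that the holonomy can be corrected at a cost of smaller order than the net gain. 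I expect the main obstacle to be precisely this bookkeeping—verifying that the correction needed to return to the correct $x_3$-value is of order strictly smaller than the $\delta\,\e^{a+1}$ saving, which is exactly what the inequality $b>4a+4$ is engineered to ensure.

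Finally, I would optimize over the free amplitude $\delta=\delta(\e)$ and the detour profile to confirm that, for all sufficiently small $\e$, the competitor's length is strictly less than $2\e$. Since $\g|_{[-\e,\e]}$ then fails to be length-minimizing for every $\e>0$, no restriction of $\g$ to a neighborhood of $0$ is minimizing, which gives that $\g$ is not locally geodesic. I would present the construction in rescaled coordinates $x_1=\e^p\xi_1$, $x_2=\e\xi_2$, $x_3=\e^q\xi_3$ with exponents $p,q$ chosen to make the leading-order comparison transparent, since this blow-up converts the exponent condition $b>4a+4$ into a clean inequality between the scaling weights of the competing length contributions.
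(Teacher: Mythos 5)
Your overall strategy---exploit that $g(X_2,X_2)=1-x_1x_2^a<1$ off the axis, pay to excurse into $x_1>0$, and correct the resulting $x_3$-holonomy, with $b>4a+4$ guaranteeing the correction is cheap---is exactly the paper's strategy. But the specific competitor you describe has a fatal flaw in the central length comparison. Your detour goes \emph{straight out} to $x_1=\delta$, across, and straight back, so the entrance/exit overhead is $2\delta$, linear in the amplitude; meanwhile your own estimate of the metric saving is $\sim\delta\,\e^{a+1}$, also linear in $\delta$ but carrying the small factor $\e^{a+1}$. Hence for every small $\e$ and every choice of $\delta$ the competitor is strictly \emph{longer}: $2\delta>c\,\delta\,\e^{a+1}$ always. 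No optimization over $\delta$ can rescue this. The missing idea is to make the transverse excursion's overhead \emph{quadratic} in the amplitude by slanting it: the paper's cut is the triangle $\kappa$ through $(0,0)$, $(h,\rho)$, $(0,2\rho)$, whose Euclidean length exceeds $2\rho$ only by $O(h^2/\rho)$, while the metric saving is still first order, $\sim h\rho^{a+1}$. Choosing $h=\rho^{a+2+c}$ makes the saving $\rho^{2a+3+c}$ dominate the overhead $h^2/\rho=\rho^{2a+3+2c}$, and the induced drift $\sim h^2\rho^{b+1}$ is then corrected by a small rectangle placed at the far endpoint $x_2=\e$ (where $\psi=x_1^2x_2^b$ is largest, so the rectangle's perimeter cost is $\sim(h^2\rho^{b+1}/\e^{b+1})^{1/3}$); the inequality $b>4a+4$ is precisely what makes this correction cost of smaller order than the saving.

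Two further, more minor, issues. First, you work on $[-\e,\e]$ and compute the saving as $\int_{-\e}^{\e}x_1x_2^a\,dx_2\sim\delta\e^{a+1}$; when $a$ is odd this integral vanishes (and the metric is in fact \emph{more} expensive for $x_1>0$, $x_2<0$), so the detour should be confined to $x_2>0$, as in the paper, which proves non-minimality of $\gamma|_{[0,\e]}$. Second, you leave the holonomy-correcting maneuver unspecified; its placement matters, since correcting near $x_2=0$, where $x_2^b$ is tiny, would require a large loop and destroy the estimate. With the slanted cut, the localization to $[0,2\rho]$, and the correction rectangle at $x_2=\e$, your scaling framework does close up and reproduces the paper's argument.
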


The proof of Theorem~\ref{thm:non-minimality} consists of the explicit construction of a curve that is shorter than $\g|_{[0,\e]}$ and that has the same initial and final points. This construction relies upon a cut and correction method, see also \cite{HL16, LM08, S25}.

Combining Theorems \ref{thm:esempio-dipendenza-metrica} and \ref{thm:non-minimality}, we give an example of a strictly abnormal geodesic which is not locally geodesic for every metric.

\begin{example}
    Fix $b=10$. Let $(M,\D)$ be as in \eqref{eq:manifold}-\eqref{eq:distribution}
    and let $\gamma$ be the curve defined by \eqref{eq:gammaintro}. For $a\in \N$ and $\alpha\in C^{\infty}((-1,1),[0,1])$, let $g$ be the Riemannian metric on $\D$ given by \eqref{eq:metricaintro}.
    By Theorem \ref{thm:esempio-dipendenza-metrica}, there exists $\e>0$ such that $\gamma|_{[-\e,\e]}$ is a strictly abnormal geodesic for the metric $g$ obtained by choosing $\alpha\equiv 1$ and $a\geq20$, while, by Theorem \ref{thm:non-minimality}, it is not locally geodesic for the metric $g$ defined by setting $\alpha\equiv 1$ and $a=1$.
\end{example}


\medskip

The paper is organized in the following way. In Section 2, we recall some basic definitions and facts in sub-Riemannian geometry and Hamiltonian mechanics, the  Pontryagin Maximum Principle and the definitions of normal and abnormal curves. We then prove Theorem~\ref{thm:abnormal-normal}. Section 3 is devoted to the study of minimality of abnormal curves in $(M,\D,g)$ and to the proof Theorem~\ref{thm:esempio-dipendenza-metrica}. In Section 4, we prove Theorem \ref{thm:non-minimality}. 

\medskip

\noindent {\bf Acknowledgements.} We thank Enrico Le Donne, Roberto Monti, Luca Nalon, and Luca Rizzi for their suggestions and the interesting discussions we had about the topics addressed in this article.

\medskip

\noindent {\bf Research funding.}
N. Paddeu was partially supported by the Swiss National Science Foundation
	(grant 200021-204501 `\emph{Regularity of sub-Riemannian geodesics and applications}').     

This project has received funding from the European Union’s Horizon 2020 research and innovation programme under the Marie Sk{\l}odowska-Curie grant agreement No 101034255. \includegraphics[width=0.65cm]{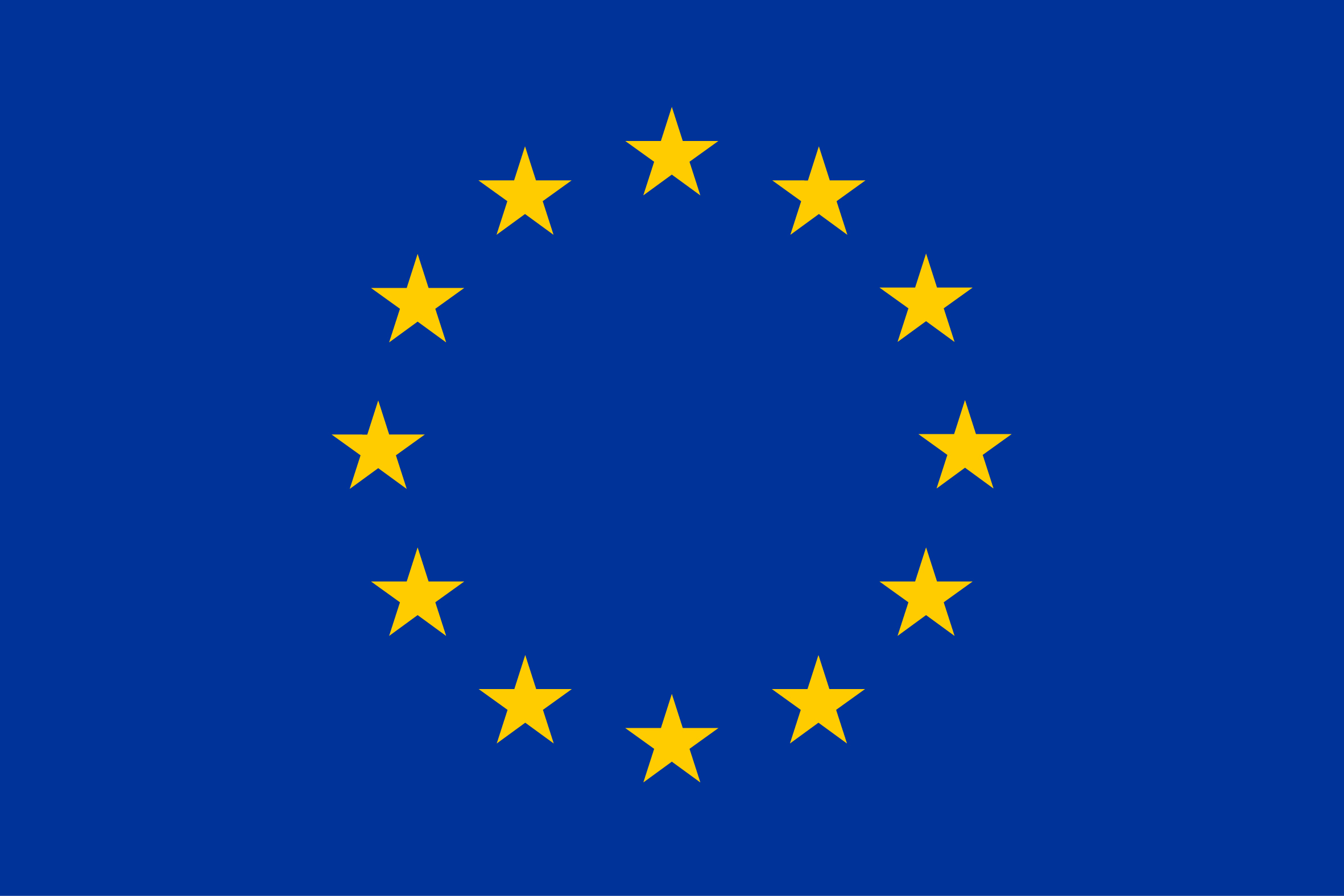}

\section{Preliminaries and proof of Theorem \ref{thm:abnormal-normal}}

Let $\Omega\subset\R^2$ be an open set and let $M:=\Omega\times\R\subset\R^3$, with coordinates $x=(x_1,x_2,x_3)\in M$. In this article, we consider  sub-Riemannian structures on $M$. Namely, we fix a function $\psi\in C^\infty(M)$ such that for all $x\in M $ there exists $n,m\in \mathbb{N}$ for which
\begin{equation}
\label{eq:b-gen-condition}
   \frac{\partial^m}{\partial x_2 ^m} \frac{\partial^n}{\partial x_1 ^n }\psi (x)\neq 0,
\end{equation}
we define the vector fields
    \begin{equation}
		\label{eq:horvf}
		X_1(x):=\partial_{x_1}, \quad 
		X_2(x):=\partial_{x_2} + \psi(x)\partial_{x_3},
	\end{equation}
and we equip $M$ with the \emph{distribution} $\D:=\vspan\{X_1,X_2\}$. Moreover, we require that the function $\psi$ does not depend on the third coordinate $x_3$, i.e., there holds
\begin{equation}
\label{eq:non-dep-psi}
    \frac{\partial}{\partial x_3}\psi(x)=0, \quad \text{for all $x\in M$}.
\end{equation}
Notice that, since $\psi$ does not depend on $x_3$, the condition~\eqref{eq:b-gen-condition} is the H\"ormander, or bracket-generating, condition. 

Let $I\subseteq \R$ be an interval. We say that an absolutely continuous curve $\eta:I \to M $ is \emph{horizontal} if $\dot \eta(t) \in \D_{\eta(t)}$ for almost every $t\in I$.
We say that a horizontal curve $\eta:I\to M$ has \emph{control} $u\in L^2(I,\mathbb{R}^2)$ if 
\begin{equation}
    \dot \eta(t)=u_1(t)X_1(\eta(t))+u_2(t)X_2(\eta(t)), \text{ for almost every } t\in I.
\end{equation}

Finally, we consider a Riemannian metric $g$ on $\D$ and the sub-Riemannian manifold $(M,\D,g)$. We require the metric $g$ to satisfy
\begin{equation}
\label{eq:invariance_of_metric}
\frac{\partial}{\partial x_3} g(X_i,X_j)=0,\quad \text{for all } i,j\in\{1,2\}.
\end{equation}

We define the \textit{sub-Riemannian length} of a horizontal curve $\eta:[0,1]\to M$ (with respect to the metric $g$) as
\begin{equation}
    L(\eta):=\int_0^1 g_{\eta(t)}(\dot\eta(t),\dot\eta(t))^{\frac12}dt,
\end{equation}
and for all $x,y\in M$, we define the
{\em distance} as 
\begin{equation}
\label{eq:def_distance}
    d(x,y):=\inf\{L(\eta) \mid \eta:[0,1]\to \R^3 \text{ horizontal}, \eta(0)=x,\eta(1)=y\}.
\end{equation}
The bracket-generating condition \eqref{eq:b-gen-condition} and Chow's Theorem \cite[Section 3.2]{ABB20} ensure that $d$ is indeed a finite value distance, and curves that realize the infimum in \eqref{eq:def_distance} are called \emph{length-minimizers}. Notice that every geodesic is a length-minimizer and every length-minimizer reparametrized by arc-length is a geodesic.

\smallskip

Necessary conditions for a horizontal curve to be a geodesic are given by the Pontryagin Maximum Principle, see Theorem \ref{thm:pmp} below. In order to state the  Pontryagin Maximum Principle, we introduce some notation and recall some basic facts about Hamiltonian mechanics.

We say that a curve $\lambda:I \to T^*M$ \textit{solves the Hamilton equations} for some function $H\in C^\oo(T^*M)$ if 
\begin{equation}
\label{eq:ham-equations}
    \dot\lambda=\Vec{H}(\lambda),
\end{equation}
where $\Vec{H}\in \Gamma(T^*M)$ solves 
\begin{equation}
\label{eq:ham-vec-field}
    \omega(\Vec{H},X)=\df H(X),\quad \text{for all } X\in \Gamma(T^*M),
\end{equation}
the $2$-form $\omega\in T^*M$ being the canonical symplectic form, see \cite[Appendix A]{Mon02}.
We refer the interested reader to \cite{ABB20, Mon02} for more details about Hamiltonian mechanics.
 The only fact of Hamiltonian mechanichs that we need in this article is the following remark, which is a direct consequence of Equations \eqref{eq:ham-equations} and \eqref{eq:ham-vec-field}.

\begin{remark}
\label{rem:hamiltoniane-differenziale-coincidente}
    Let $\lambda:I\to T^*M$ and $H,\tilde{H}\in C^\infty(T^*M)$. If $\df H_{\lambda(t)}=\df \tilde{H}_{\lambda(t)}$ for all $t\in I$, then $\lambda$ solves the Hamilton equations for $H$ if and only if it solves the Hamilton equations for $\vec{H}$.
\end{remark}

We are now ready to state the  Pontryagin Maximum Principle. We follow \cite[Theorems 3.59-4.25, Proposition 4.38]{ABB20}. 
Let $\zeta\in \Omega^1(M)$ be a $1$-form such that $\D_x =\ker(\zeta_x)$ for all $x\in M$.

\begin{theorem}[Pontryagin Maximum Principle]
\label{thm:pmp}
    Let $\eta:I\to M$ be a length-minimizing curve parametrized by multiple of arc-length. Then, at least one of the following conditions is satisfied:
\begin{itemize}
    \item[(N)] there exists a lift $\lambda:I\to T^*M $ of $\eta$ that solves the Hamilton equations for the function $H\in C^\infty(T^*M )$ defined by 
    \begin{equation}
        \label{eq:normal_Hamiltonian}
        H(\xi):=\max \{\langle\xi,X\rangle \mid X\in \Gamma(\D), \; g(X,X)=1\}, \quad \text{for all } \xi \in T^*M;
    \end{equation}

    \item[(A)]
    for all $t\in I$ there holds
    \begin{equation}
        (\zeta\wedge \df \zeta)_{\eta(t)}=0.
    \end{equation}
\end{itemize}    
\end{theorem}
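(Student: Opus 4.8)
The plan is to obtain both alternatives from the Lagrange multiplier rule applied to the endpoint map, which is the standard route to the Pontryagin Maximum Principle in sub-Riemannian geometry; I would follow \cite[Chapters~3--4]{ABB20} for the functional-analytic details and only sketch the structure here. Write $I=[a,b]$ (length-minimality being considered on a compact interval), set $x_0:=\eta(a)$, and consider the \emph{endpoint map} $E\colon \UU\to M$ sending a control $u\in\UU\subseteq L^2(I,\R^2)$ to the final point $\eta_u(b)$ of the horizontal trajectory $\eta_u$ issuing from $x_0$ with control $u$. Since $\eta$ is parametrized by a constant multiple of arc-length, it minimizes the energy $J(u):=\tfrac12\int_a^b g_{\eta_u(t)}(\dot\eta_u,\dot\eta_u)\,dt$ among all horizontal curves sharing its endpoints; hence the control $u$ of $\eta$ is a critical point of $J$ constrained to the fiber $E^{-1}(\eta(b))$.

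By the Lagrange multiplier rule for this constrained problem there exist $\nu\in\{0,1\}$ and a covector $\lambda_b\in T^*_{\eta(b)}M$, not both zero, with
\begin{equation}
    \lambda_b\circ D_uE=\nu\,dJ_u.
\end{equation}
Transporting $\lambda_b$ along $\eta$ by the cotangent lift of the flow yields a curve $\lambda\colon I\to T^*M$ covering $\eta$; expressing $D_uE$ through the variation formula for the flows of $X_1,X_2$ shows that $\lambda$ is an integral curve of the Hamiltonian vector field of the control-dependent Hamiltonian $\langle\xi,u_1X_1+u_2X_2\rangle-\tfrac{\nu}{2}g_x(u,u)$ evaluated along $u(t)$. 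When $\nu=1$, maximizing over $u$ and passing from energy to length for the arc-length parametrization recovers the Hamiltonian flow of $H$ in \eqref{eq:normal_Hamiltonian}, so that $\lambda$ solves the Hamilton equations for $H$; this is alternative (N).

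It remains to treat the abnormal case $\nu=0$ and to rewrite it as the pointwise condition (A). When $\nu=0$ the lift $\lambda$ is nowhere zero, and the relation $\lambda_b\circ D_uE=0$ translates, through the same variation formula, into $\langle\lambda(t),X_i(\eta(t))\rangle=0$ for $i=1,2$ and every $t\in I$; that is, $\lambda(t)$ annihilates $\D_{\eta(t)}$. Differentiating these two identities along $\eta$ and using the Hamilton equations gives, for almost every $t$,
\begin{equation}
    u_2(t)\,\langle\lambda(t),[X_1,X_2](\eta(t))\rangle=0=u_1(t)\,\langle\lambda(t),[X_1,X_2](\eta(t))\rangle.
\end{equation}
Since $\eta$ has constant speed $|u|_g\equiv\mathrm{const}\neq 0$, the control $(u_1,u_2)$ is nowhere zero, whence $\langle\lambda(t),[X_1,X_2](\eta(t))\rangle=0$ for all $t$. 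As $\lambda(t)\neq 0$ spans the one-dimensional annihilator of the plane $\D_{\eta(t)}$, it is proportional to $\zeta_{\eta(t)}$, so $\zeta_{\eta(t)}([X_1,X_2])=0$. Completing $X_1,X_2$ to a frame by a field $V$ with $\zeta(V)\neq 0$ and using $\df\zeta(X_1,X_2)=-\zeta([X_1,X_2])$, the wedge identity $(\zeta\wedge\df\zeta)(X_1,X_2,V)=\zeta(V)\,\df\zeta(X_1,X_2)$ shows that $\zeta_{\eta(t)}([X_1,X_2])=0$ is equivalent to $(\zeta\wedge\df\zeta)_{\eta(t)}=0$, which is alternative (A).

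The step I expect to be the main obstacle is making the Lagrange multiplier argument rigorous: one must establish the $C^1$ regularity of $E$ on $L^2$-controls together with the exact form of $D_uE$, and cope with the fact that the fiber $E^{-1}(\eta(b))$ need not be a manifold, precisely because $D_uE$ may fail to be surjective; this non-surjectivity is exactly what produces the abnormal multiplier $\nu=0$. Since these are the foundational facts developed in \cite[Chapters~3--4]{ABB20}, I would cite them rather than reprove them and devote the argument to the final pointwise translation above, which is the part specific to the rank-$2$, three-dimensional setting considered here.
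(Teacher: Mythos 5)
The paper gives no proof of this statement: Theorem~\ref{thm:pmp} is quoted directly from the literature, with the explicit attribution ``We follow \cite[Theorems 3.59--4.25, Proposition 4.38]{ABB20}'', so there is nothing in the paper to compare your argument against line by line. That said, your sketch is a correct outline of the standard proof found in that very reference: the Lagrange multiplier rule for the energy functional constrained to a fiber of the endpoint map, the dichotomy $\nu=1$ (normal) versus $\nu=0$ (abnormal), and then the rank-$2$, dimension-$3$ specific translation of the abnormal condition into condition (A). Your final computation is the genuinely pointwise part and is right: differentiating $\langle\lambda(t),X_i(\eta(t))\rangle=0$ along the flow yields $u_j\langle\lambda,[X_1,X_2]\rangle=0$, the nonvanishing of the control (constant nonzero speed) forces $\langle\lambda,[X_1,X_2]\rangle\equiv 0$, and since the nonzero covector $\lambda(t)$ spans the annihilator of $\D_{\eta(t)}$ it is proportional to $\zeta_{\eta(t)}$, so the identity $(\zeta\wedge\df\zeta)(X_1,X_2,V)=\zeta(V)\,\df\zeta(X_1,X_2)=-\zeta(V)\,\zeta([X_1,X_2])$ gives exactly $(\zeta\wedge\df\zeta)_{\eta(t)}=0$; this is the content of \cite[Proposition 4.38]{ABB20} that the authors invoke. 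The only caveats are the ones you already flag yourself: the $C^1$ regularity of the endpoint map on $L^2$ controls and the multiplier rule in the non-surjective case are substantial and are precisely what the paper (and you) delegate to \cite{ABB20}, and the degenerate case of a constant curve (zero speed) should be excluded or treated separately before asserting the control is nowhere zero. In short: correct, same route as the cited source, and consistent with the paper's decision to state the result without proof.
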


Horizontal curves satisfying condition (N) of Theorem \ref{thm:pmp} are called \emph{normal}, while horizontal curves satisfying condition (A) are called \emph{abnormal}. A horizontal curve may be both normal and abnormal, and all normal curves are smooth and locally length-minimizing. Horizontal curves that are abnormal but not normal are called \emph{strictly abnormal}, and they are the subject of study of this article. 

Notice that condition (A) in Theorem \ref{thm:pmp} does not depend on the metric $g$, and it follows that a curve in $(M,\D)$ is abnormal if and only if it is supported in the set 
\begin{equation}
    \Sigma:= \{x\in\R^3 \mid (\zeta\wedge \df \zeta)_{x}=0\},
\end{equation}
which is called the \emph{Martinet surface} of $(M,\D)$.

\begin{remark}
\label{rem:our_martinet_surface}
    For a distribution $\D$ as in \eqref{eq:horvf}, the Martinet surface is given by
    \begin{equation}
\label{eq:our_martinet_surface}
    \Sigma=\Big\{x\in M \; \Big|\; \frac{\partial \psi}{\partial x_1}(x)=0\Big\}.    
    \end{equation}
Indeed, we choose $\zeta:= \df x_3-\psi \df x_2$, so that we have
\begin{equation}
    (\zeta \wedge \df \zeta)_x= -\frac{\partial \psi}{\partial x_1}(x) \df x_1\wedge \df x_2\wedge \df x_3,\quad \text{for all } x\in M.
\end{equation}
Hence, $(\zeta \wedge \df \zeta)_x=0$ if and only if $\frac{\partial \psi}{\partial x_1}(x)=0$.
    \end{remark}

\smallskip

We conclude this section stating a criterion to decide whether a horizontal curve inside the Martinet surface is normal. To do so, we study geodesics in $\Omega$ equipped with a specific Riemannian metric on $\R^2$. Namely, since the vector fields defined in~\eqref{eq:horvf} and the metric $g$ do not depend on $x_3$, see \eqref{eq:non-dep-psi} and \eqref{eq:invariance_of_metric}, we can equip $\Omega$ with the Riemannian metric $\bar{g}$ defined by
\begin{equation}
    \bar{g}(\partial_{x_i},\partial_{x_j}):=g(X_i,X_j),\quad \text{for all } i,j\in\{1,2\}.
\end{equation}
The choice of this metric makes the projection 
$\pi:M\to \Omega$, $\pi(x_1,x_2,x_3):=(x_1,x_2)$ for all $(x_1,x_2,x_3)\in M$, a submetry, see \cite{libro_Enrico}. Normal curves inside the Martinet surface are related to geodesics for the metric $\bar{g}$ by the following proposition.

 \begin{proposition}
 \label{prop:quando-normal}
     Let $\eta:I\to M$ be an abnormal curve. Assume that, for all $t\in I$, we have $\psi(\eta(t))=0$ and $\df \psi_{\eta(t)}=0$.
     Then, the curve $\eta$ is normal if and only if $\pi \circ \eta$ is a geodesic for the Riemannian metric $\bar{g}$.
 \end{proposition}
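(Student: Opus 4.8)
The plan is to reduce both conditions to statements about the Hamiltonian flow on $T^*M$ and then compare that flow, along the locus $\{\psi=0,\ \df\psi=0\}$ containing $\eta$, with the cogeodesic flow of $\bar g$ on $T^*\Omega$. Write $h_i(\xi):=\langle\xi,X_i\rangle$ for the fiber-linear functions of the frame \eqref{eq:horvf}; in coordinates $\xi=\xi_1\df x_1+\xi_2\df x_2+\xi_3\df x_3$ one has $h_1=\xi_1$ and $h_2=\xi_2+\psi\,\xi_3$. The Hamiltonian $H$ of \eqref{eq:normal_Hamiltonian} satisfies $H^2=\sum_{i,j}g^{ij}h_ih_j$, where $(g^{ij})$ is the inverse of $(g(X_i,X_j))$. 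Since normality and the property of being a geodesic are invariant under positive reparametrization, I may take $\eta$ parametrized by arc length, so that $H\equiv 1$ along its lift; then $\df H=\df\tilde H$ there and Remark~\ref{rem:hamiltoniane-differenziale-coincidente} lets me replace $H$ by the smooth squared Hamiltonian $\tilde H:=\tfrac12\sum_{i,j}g^{ij}h_ih_j$ without changing the notion of normal extremal. By \eqref{eq:invariance_of_metric} and \eqref{eq:non-dep-psi}, $\tilde H$ is independent of $x_3$, so $\xi_3$ is a first integral of its flow.

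First I would write out the Hamilton equations for $\tilde H$ and restrict them to the locus $\{\psi=0,\ \df\psi=0\}$. On this locus $h_2=\xi_2$, the base equation $\dot x_3=\partial_{\xi_3}\tilde H=\psi\,\partial_{\xi_2}\tilde H$ vanishes (consistently with $\dot x_3=u_2\psi=0$ for the horizontal curve $\eta$), and every term of $\partial_{x_1}\tilde H$ and $\partial_{x_2}\tilde H$ carrying a factor $\psi$ or $\partial_{x_k}\psi$ drops out. What survives is exactly
\[
\dot x_1=g^{11}\xi_1+g^{12}\xi_2,\qquad \dot x_2=g^{12}\xi_1+g^{22}\xi_2,
\]
\[
\dot\xi_1=-\tfrac12\partial_{x_1}\!\big(g^{11}\xi_1^2+2g^{12}\xi_1\xi_2+g^{22}\xi_2^2\big),\qquad \dot\xi_2=-\tfrac12\partial_{x_2}\!\big(g^{11}\xi_1^2+2g^{12}\xi_1\xi_2+g^{22}\xi_2^2\big),\qquad \dot\xi_3=0.
\]

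The key observation is that $\bar g$ has the same coefficient matrix $(g(X_i,X_j))$ in the frame $\partial_{x_1},\partial_{x_2}$, hence the same inverse $(g^{ij})$, so the displayed equations in $(x_1,x_2,\xi_1,\xi_2)$ are precisely the Hamilton equations of the Riemannian Hamiltonian $\bar H(x,p)=\tfrac12\sum_{i,j}g^{ij}p_ip_j$ of $\bar g$ under the identification $p_i=\xi_i$. This yields both implications. If $\eta$ is normal, a normal lift $\lambda=(\eta,\xi)$ solves the equations above, so $(\pi\circ\eta,(\xi_1,\xi_2))$ solves the cogeodesic equations of $\bar g$ and $\pi\circ\eta$ is a $\bar g$-geodesic. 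Conversely, given a cogeodesic lift $(\pi\circ\eta,(p_1,p_2))$ of $\pi\circ\eta$, I would set $\xi_1=p_1$, $\xi_2=p_2$ and $\xi_3$ equal to an arbitrary constant; using $\psi(\eta(t))=0$, $\df\psi_{\eta(t)}=0$ and the constancy of $\xi_3$, one checks that $\lambda=(\eta,\xi)$ satisfies each Hamilton equation for $\tilde H$ pointwise, so $\eta$ is normal.

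The routine but delicate point is the vanishing of the $\psi$-terms in $\partial_{x_1}\tilde H$ and $\partial_{x_2}\tilde H$: this is where the full hypothesis $\df\psi_{\eta(t)}=0$ — not merely the Martinet condition $\partial_{x_1}\psi=0$ of \eqref{eq:our_martinet_surface} — is used, since $\partial_{x_2}\psi$ also occurs. I expect the only genuine care needed is the bookkeeping confirming that no surviving term depends on $\xi_3$ or on $x_3$, so that the reduced system closes on $(x_1,x_2,\xi_1,\xi_2)$ and coincides with the flow of $\bar H$; the constancy of $\xi_3$ together with the $x_3$-independence granted by \eqref{eq:invariance_of_metric}–\eqref{eq:non-dep-psi} is precisely what makes this reduction exact.
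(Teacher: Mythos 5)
Your proposal is correct and follows essentially the same route as the paper: both arguments exploit that $\psi$ and $\df\psi$ vanish along $\eta$ to show that, on the lift of $\eta$, the differential of the (squared) sub-Riemannian Hamiltonian coincides with that of a Hamiltonian involving only $(x_1,x_2,\xi_1,\xi_2)$, whose flow is the cogeodesic flow of $\bar g$; the paper packages this via an orthonormal frame and Remark~\ref{rem:hamiltoniane-differenziale-coincidente}, while you write out the Hamilton equations in coordinates, but the mechanism is identical. Your explicit handling of the homogeneity issue (passing from the max-Hamiltonian to $\tfrac12 H^2$ on the level set $H=1$) and of the converse lift with $\xi_3$ an arbitrary constant is sound.
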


\begin{proof}
Let $v_1,v_2,w_1,w_2\in C^\infty(\Omega)$ such that $V,W\in \Gamma(\D)$ defined by
\begin{eqnarray}
    V:=v_1X_1+v_2X_2,\quad
    W:=w_1X_1+w_2X_2,
\end{eqnarray}
are an orthonormal frame for $g$.
We can write the normal Hamiltonian $H$ as

\begin{equation}
    H(\lambda)=\frac{1}{2} \left(\langle\lambda,V\rangle^2+\langle\lambda,W\rangle^2 \right), \quad \text{for all }\lambda\in T^*M,
\end{equation}
see for example \cite{Mon02}.

Define
\begin{equation}
    \tilde{H}(\lambda):= \frac{1}{2} \left(\langle\lambda,v_1\partial_{x_1}+v_2\partial_{x_2}\rangle^2+\langle\lambda,w_1\partial_{x_1}+w_2\partial_{x_2}\rangle^2 \right), \quad \text{for all }\lambda\in T^*M,
\end{equation}

Denote by $\Pi:T^*M\to M$ the canonical projection. 
We have 
\begin{eqnarray}
    H(\lambda)-\tilde{H}(\lambda)&=& \langle\lambda,v_1\partial_{x_1}+v_2\partial_{x_2}\rangle\langle \lambda ,v_2\psi(\Pi(\lambda))\partial_{x_3}\rangle+\frac{1}{2}\langle \lambda ,v_2\psi(\Pi(\lambda))\partial_{x_3}\rangle^2 \\&+&\langle\lambda,w_1\partial_{x_1}+w_2\partial_{x_2}\rangle\langle \lambda ,w_2\psi(\Pi(\lambda))\partial_{x_3}\rangle+\frac{1}{2}\langle \lambda ,w_2\psi(\Pi(\lambda))\partial_{x_3}\rangle^2,
\end{eqnarray}
Consequently, for all $\lambda \in T^*M$ for which $\psi(\Pi(\lambda))=0$ and $\df\psi_{\Pi(\lambda)}=0$, there holds
 \begin{equation}
        \label{eq:claim}
        \df H_{\lambda}-\df \tilde{H}_\lambda=0.
    \end{equation}  
In particular, for all $t\in I$ and for all $\lambda\in T_{\eta(t)}^*M$, we have
    \begin{equation}
        \label{eq:claimHam}
        \df H_{\lambda}=\df \tilde{H}_\lambda.
    \end{equation}
    
On the one hand, by Remark \ref{rem:hamiltoniane-differenziale-coincidente} and Equation \eqref{eq:claimHam}, we have that the curve $\gamma$ admits a lift to $T^*M$ solving the Hamilton equations for $H$ if and only if it admits a lift solving the Hamilton equations for $\tilde{H}$. On the other hand, it is a straightforward exercise to check that the latter condition is equivalent to $\pi\circ \gamma$ being a geodesic for $\bar{g}$ (we recall that $\pi\circ \gamma$ is a geodesic if and only if it admits a lift to $T^*\Omega$ solving the Hamilton equations for $T^*\Omega\ni \lambda \mapsto \frac{1}{2} \left(\langle\lambda,v_1\partial_{x_1}+v_2\partial_{x_2}\rangle^2+\langle\lambda,w_1\partial_{x_1}+w_2\partial_{x_2}\rangle^2 \right)$).
\end{proof}

As a consequence of Proposition \ref{prop:quando-normal}, we prove the following result.

\begin{corollary}
    \label{thm:abnormal-normal}
    For $a,b\in \N$ and $\a\in C^\oo((-1,1),[0,1])$, let $(M,\D,g)$ be the sub-Riemannian manifold defined by \eqref{eq:manifold}, \eqref{eq:distribution}, and \eqref{eq:metricaintro}. Let $\gamma:(-1,1)\to M$ be the curve defined by \eqref{eq:gammaintro}. Fix $t_1,t_2\in(-1,1)$, with $t_1<t_2$. Then, the curve $\gamma|_{[t_1,t_2]}$ is normal if and only if $\alpha(t)=0$ for all $t\in[t_1,t_2]$.
\end{corollary}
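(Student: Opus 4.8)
The plan is to apply Proposition~\ref{prop:quando-normal} to $\g$, thereby reducing the normality of $\g|_{[t_1,t_2]}$ to the question of whether its planar projection is a geodesic of the two-dimensional metric $\bar g$ on $\Omega=(-\tfrac12,\tfrac12)\times(-1,1)$.

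First I would verify that $\g$ satisfies the hypotheses of Proposition~\ref{prop:quando-normal}. In our setting $\psi(x)=x_1^2x_2^b$, so $\frac{\d\psi}{\d x_1}=2x_1x_2^b$ vanishes along $\g(t)=(0,t,0)$; by Remark~\ref{rem:our_martinet_surface} this means $\g$ is supported in the Martinet surface and is therefore abnormal. Furthermore each of $\psi$, $\frac{\d\psi}{\d x_1}$ and $\frac{\d\psi}{\d x_2}$ is divisible by $x_1$, while $\frac{\d\psi}{\d x_3}\equiv 0$; hence $\psi(\g(t))=0$ and $\df\psi_{\g(t)}=0$ for all $t$. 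These are exactly the hypotheses of Proposition~\ref{prop:quando-normal}, which therefore tells us that $\g|_{[t_1,t_2]}$ is normal if and only if the curve $c(t):=\pi(\g(t))=(0,t)$ is a geodesic of $\bar g$ on $[t_1,t_2]$.

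Next I would determine for which $\alpha$ the curve $c$ is a $\bar g$-geodesic. Writing $f(x_1,x_2):=1-\alpha(x_2)x_1x_2^a$, we have $\bar g=\df x_1^2+f\,\df x_2^2$ with energy Lagrangian $L=\tfrac12\big((\dot x_1)^2+f(\dot x_2)^2\big)$. Along $c$ one has $\dot x_1\equiv 0$, $\dot x_2\equiv 1$, and, since $f(0,x_2)\equiv 1$, also $f=1$ and $\frac{\d f}{\d x_2}=0$ there; substituting these into the Euler--Lagrange equation for $x_2$ shows it holds automatically. The Euler--Lagrange equation for $x_1$ is $\ddot x_1=\tfrac12\frac{\d f}{\d x_1}(\dot x_2)^2$, and because $\frac{\d f}{\d x_1}(0,t)=-\alpha(t)t^a$, evaluating it along $c$ leaves the single scalar condition $\alpha(t)\,t^a=0$ for all $t\in[t_1,t_2]$.

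Finally I would translate this into $\alpha\equiv 0$. Since $t^a\neq 0$ for $t\neq 0$, the identity $\alpha(t)t^a=0$ forces $\alpha$ to vanish on $[t_1,t_2]\setminus\{0\}$, a dense subset of $[t_1,t_2]$ because $t_1<t_2$; continuity of $\alpha$ then gives $\alpha\equiv 0$ on $[t_1,t_2]$, and the reverse implication is trivial. Combining the two equivalences proves the corollary. The only computational step is the evaluation of the Euler--Lagrange equations along $c$; the point to keep in mind is that the $x_2$-equation contributes nothing, because $f\equiv 1$ on $\{x_1=0\}$, so that normality is controlled entirely by the scalar quantity $\alpha(t)t^a$.
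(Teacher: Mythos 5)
Your proposal is correct and follows essentially the same route as the paper: verify the hypotheses of Proposition~\ref{prop:quando-normal} for $\g$, reduce to whether $(0,t)$ is a $\bar g$-geodesic, and observe that the only nontrivial geodesic equation reduces to $\alpha(t)t^a=0$ (your Euler--Lagrange computation is just the Christoffel-symbol computation of the paper in different notation). Your extra care in passing from $\alpha(t)t^a=0$ to $\alpha\equiv 0$ via density of $[t_1,t_2]\setminus\{0\}$ and continuity is a small point the paper glosses over, and is a welcome addition.
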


\begin{proof}[Proof of Theorem
    \ref{thm:abnormal-normal}]
Let $\Omega=(-\frac12,\frac12)\times(-1,1)$, let $X_1$ and $X_2$ be as in \eqref{eq:horvfintro}, let $\g$ be the curve defined in \eqref{eq:gammaintro}, and let $g$ be the metric given by \eqref{eq:metricaintro}. 

The curve $\g$ satisfies $\psi(\g(t))=0$ and $\df\psi_{\g(t)}=0$, for all $t\in(-1,1)$. Thus, by Proposition \ref{prop:quando-normal}, it is enough to check that $\bar{\g}:=\pi \circ \gamma|_{[t_1,t_2]}$ is a Riemannian geodesic for $\bar{g}$ if and only if for all $t\in [t_1,t_2]$ we have $\alpha(t)=0$.
Let us denote with $\Gamma$ the Christoffel symbols for the Levi-Civita connection associated to the metric $\bar{g}$. The curve $\pi\circ \bar{\g}$ is a geodesic if and only if
\begin{equation*}
    \Ddot{\bar{\gamma}}^k(t)+\dot{\bar{\g}}^i(t)\dot{\bar{\g}}^j(t)\Gamma_{ij}^k(\bar{\g})=0, \quad \text{ for all } k\in\{1,2\}, t \in [t_1,t_2],
\end{equation*}
that is, if and only if 
\begin{equation}
\label{eq:condition-riem-geodesic}
    \Gamma_{22}^k(0,t)=0 \quad \text{ for all } k\in\{1,2\}, t \in [t_1,t_2].
\end{equation}
It is an exercise to check that $\Gamma_{22}^1(0,t)=0$ for all $t\in (-1,1)$ and that 
\begin{equation*}
    \Gamma_{22}^1(0,t)=\frac{1}{2}\frac{\partial g_{22}}{\partial x_1}(0,t)=-\alpha(t)t^a,
\end{equation*}
thus the condition in \eqref{eq:condition-riem-geodesic} holds if and only $\alpha(t)=0$ for all $t\in[t_1,t_2]$.
\end{proof}

\section{Proof of Theorem \ref{thm:esempio-dipendenza-metrica}}

This section is devoted to the proof of Theorem \ref{thm:esempio-dipendenza-metrica}. We start with some preparatory considerations.

We fix $\Omega$ to be the open set $\Omega:=(-\frac12,\frac12)\times(-1,1)$, we choose $\psi(x):=x_1^2x_2^b$, with $x\in M=\Omega\times\R$, so that the vector fields $X_1$ and $X_2$ have the form in \eqref{eq:horvfintro}. We consider the metric $g$ as in \eqref{eq:metricaintro}, where $a,b\in\N$ and $\alpha\in C^\infty((-1,1),[0,1])$. Let us define the smooth function $\phi:M\to\R$,
\begin{equation}
    \phi(x):= 1 - \a(x_2)x_1^kx_2^a,
\end{equation}
so that, for all $x\in M$, we have 
\begin{equation}
    g_x(X_1,X_1)=1, \quad g_x(X_1,X_2)=g_x(X_2,X_1)=0, \quad g_x(X_2,X_2)=\phi(x).
\end{equation}

\medskip

Let $\eta:[T_1,T_2]\to M$ be a horizontal curve. A horizontal curve $\w:[0,\tau]\to M$ is a \textit{competing curve} for $\eta$ if $\w(0)=\eta(T_1)$ and $\w(\tau)=\eta(T_2)$. Notice that, if $u\in L^2([0,\tau],\R^2)$ is the control of $\w$, then we have
\begin{equation}
    \label{eq:control}
    \begin{cases}
        \dot\w_1=u_1,\\
        \dot\w_2=u_2,\\
        \dot\w_3=u_2\psi(\w).
    \end{cases}
\end{equation}
Since $\psi(x)$ does not depend on $x_3$, we can rewrite the condition of being a competing curve in terms of the first two coordinates. Indeed, let $\w:[0,\tau]\to M$ be an absolutely continuous curve with $\w(0)=\eta(T_1)$. Then $\w$ is a competing curve for $\eta$ if and only if $\w_1(\tau)=\eta_1(T_2)$, $\w_2(\tau)=\eta_2(T_2)$, and
\begin{equation}
\label{cond-end3}
    \int_0^\tau \dot\w_2\psi(\w)dt=\eta_3(T_2)-\eta_3(T_1).
\end{equation}
In order to prove Theorem \ref{thm:esempio-dipendenza-metrica}, we are going to consider either the case $\eta:=\g|_{[0,\e]}$ or the case $\eta:=\g|_{[-\e,\e]}$. In both cases, we have that $\eta_3$ is identically $0$, and thus the condition in \eqref{cond-end3} rewrites
\begin{equation}
    \label{eq:endpoint_final}
    \int_0^\tau \dot\w_2\psi(\w)dt=0.
\end{equation}

We recall that $L(\cdot)$ denotes the sub-Riemannian length of a horizontal curve.

\medskip

The proof of Theorem \ref{thm:esempio-dipendenza-metrica} consists in showing that all possible competing curves for $\bar\g_\e:=\g|_{[-\e,\e]}$ shorter than $\bar\g_\e$ are reparameterizations of $\bar\g_\e$ itself. The proof uses a technique that generalizes the one developed by Liu and Sussman in \cite[Section 7.1]{LS95}.

\begin{proof}[Proof of Theorem \ref{thm:esempio-dipendenza-metrica}.]  
    Assume that $b$ is even and $b\leq2a$.
    Fix $\e_1>0$ such that
    \begin{equation}
        \label{stima su radice}
1-\frac{1}{3}t\leq \sqrt{1-t}\leq 1-\frac{t}{2}, \quad \text{for all } t\in (-\e_1,\e_1),  
    \end{equation}
    and set 
    \begin{equation}
    \label{eq:def-epsilon}
      \e:=\min\left\{\frac{\e_1}{16},\frac18\left(2+b\right)^{-\frac{2}{b}}, \frac{1}{65}\right\}  
    \end{equation}
    
    Let $\w:[0,\tau]\to\Omega$ be a competing curve for $\bar\g_\e:=\g|_{[-\e,\e]}$ and assume that 
    \begin{equation}
        \label{eq:bycontradiction}
        L(\w)\leq L(\bar\g_\e)=2\e.
    \end{equation}
    The proof consists in estimating from above and from below the quantity 
    \[
	\displaystyle J = \int_0^\tau \psi(\w(t)) dt.
	\]
    Combining the two estimates for $J$,  we shall show that $\w$ is a reparametrization of $\gamma$.
    	
	Let $u=(u_1,u_2)$ be the control of $\w$. Since the length does not depend on the parameterization, we assume that $\w$ is parameterized by the arc-length, i.e., we have $\tau=L(\w)$ and
	\begin{align}
		\label{eq:pbal} u_1^2(t)+\phi(\w(t))u_2^2(t)= 1, \quad \text{for all } t\in[0,\tau].
	\end{align}

    We notice that, if $L_{\mathrm{eu}}$ denotes the euclidean length, then $\w$ satisfies $\frac{1}{2}L(\w)\leq L_{\mathrm{eu}}(\w)\leq 2L(\w)$. This fact, together with \eqref{eq:bycontradiction}, implies
	\begin{equation}
		\label{eq:w1w2<eps}
		\max_{t\in[0,\tau]}\{|\w_1(t)|,|\w_2(t)|\}\leq 8\e.
	\end{equation}
	
	Let us define the quantity
    \begin{equation}
        \b:= \max_{[0,\tau]}|\psi(\w)|^{\frac12}= \max_{[0,\tau]}|\w_1\w_2^{\frac b2}|,
    \end{equation}
    On the one hand, we have
	\begin{equation}
        \label{eq:J_from_above}
		\begin{split}
			J&=\int_0^\tau(1-u_2(t)\phi(\w(t))^{\frac12})\psi(\w(t))dt + \int_0^\tau u_2(t)\phi(\w(t))^{\frac12}\psi(\w(t))dt\\
            &\leq \b^2\left(\tau - \int_0^\tau u_2(t)\phi(\w(t))^{\frac12}dt\right) + \int_0^\tau u_2(t)\phi(\w(t))^{\frac12}\psi(\w(t))dt,
		\end{split}
	\end{equation}
    where in the last inequality we used 
 that $1-\phi(\w(t))u_2^2(t)\geq 0$, see \eqref{eq:pbal}. We need to estimate the two integral quantities appearing in the last line of the latter chain of inequalities.
 
    The second integral can be studied in the following way. First, by Equations \eqref{stima su radice}, \eqref{eq:def-epsilon} and \eqref{eq:w1w2<eps}, we have 
    \begin{equation}
        \label{eq:taylor_phi}
    1-\frac13\a(\w_2(t))\w_1(t)\w_2(t)^a \leq    \phi(\w(t))^{\frac12}\leq 1-\frac12\a(\w_2(t))\w_1(t)\w_2(t)^a.
    \end{equation}
    Then, using \eqref{eq:control}, \eqref{eq:endpoint_final} and \eqref{eq:taylor_phi}, we deduce
    \begin{equation}
        \begin{split}
            \int_0^\tau u_2(t)\phi(\w(t))^{\frac12}\psi(\w(t))dt&\leq -\frac12\int_0^\tau u_2(t)\psi(\w(t))\a(\w_2(t))\w_1(t)\w_2(t)^a dt\\
            &\leq  \int_0^\tau |\psi(\w(t))\w_1(t)\w_2(t)^a|dt,
        \end{split}
    \end{equation}
where in the last equality, we used \eqref{eq:pbal} and the fact that $\frac{1}{2} \leq \phi(\omega(t)) \leq \frac{3}{2}$ to bound $|u_2|\leq 2$. By assumption we have $b\leq2a$, thus we have $\w_2^a\leq \w_2^{\frac b2}$, and therefore 
    \begin{equation}
        \int_0^\tau u_2(t)\phi(\w(t))^{\frac12}\psi(\w(t))dt\leq \b^3\tau.
    \end{equation}
    Plugging the last inequality in \eqref{eq:J_from_above}, we deduce
    \begin{equation}
        \label{eq:Jabove}
        J\leq \b^2\left(\tau - \int_0^\tau u_2(t)\phi(\w(t))^{\frac12}dt\right) + \b^3\tau.
    \end{equation}
   

We next estimate $\int_0^\tau u_2\phi(\w)^{\frac12}dt$. Since $\w$ is a competing curve for $\bar\g_\e$, we have $\w_2(0)=-\e$ and $\w_2(\tau)=\e$, and thus, by \eqref{eq:control}, we get
\begin{equation}
    \label{eq:int_u2}
    \int_0^\tau u_2(t)dt= \int_0^\tau \dot\w_2(t)dt = 2\e.
\end{equation}
Using the latter equality, we deduce
	\begin{equation}
        \begin{split}
			-\int_0^\tau u_2(t)\phi(\w(t))^{\frac12}dt
            &= -\int_0^\tau u_2(t)dt -\int_0^\tau u_2(t)\left(\phi(\w(t))^{\frac12}-1\right)dt\\           
            &\stackrel{\eqref{eq:int_u2}}{=} -2\e -\int_0^\tau u_2(t)\left(\phi(\w(t))^{\frac12}-1\right)dt\\
            &\stackrel{\eqref{eq:taylor_phi}}{\leq}-2\e + \frac12 \int_0^\tau |u_2(t)||\a(\w_2(t))\w_1(t)\w_2(t)^a|dt\\
            &\leq -2\e + \int_0^\tau |\w_1(t)\w_2(t)^a|dt\\
            &\leq -2\e +  \b\tau,
        \end{split}
	\end{equation}
    where, in the last two inequalities, we used the bounds $|u_2(t)|\leq 2$, $|\alpha(\omega_2(t))|\leq 1$, and the assumption $b\leq2a$.

Hence, we can rewrite \eqref{eq:Jabove} as
\begin{equation}
    \label{eq:Jabove-meglio}
    J\leq \b^2\left(\tau - 2\e + \b \tau\right) + \b^3\tau=\beta^2(\tau-\e)+2\beta^3\tau.
\end{equation}

%
%
We next pass to the estimate for $J$ from below. We claim that
	\begin{equation}
		\label{eq:Jbelow}
		J=\int_0^\tau \psi(\w(t))dt\geq \frac{\b^3}{8}.
	\end{equation}	
 Indeed, set  $P(t)=\w_1(t)\w_2(t)^{\frac b2}$.
	By Equations
 \eqref{eq:control} and \eqref{eq:pbal} and being $\frac12\leq \phi\leq \frac32$, we have $|\dot\w_1|,|\dot\w_2|\leq 2$. Therefore, the derivative $\dot P(t) = \dot\w_1(t)\w_2(t)^{\frac b2} + q\dot\w_2(t)\w_1(t)\w_2(t)^{{\frac b2}-1}$ satisfies 
	\begin{equation}
		\label{eq:stimaderP}
		|\dot P(t)|\leq  2|\w_2(t)|^{\frac b2} + b|\w_1(t)||\w_2(t)|^{{\frac b2}-1}, \quad \text{for a.e. } t\in[0,\tau].
	\end{equation}	
	From \eqref{eq:w1w2<eps} and \eqref{eq:stimaderP}, it follows that $|\dot P(t)|\leq (2+b)8^{q} \e^{q}$ for a.e. $t\in[0,\tau]$. In particular, by \eqref{eq:def-epsilon}, we have
	\begin{equation}
		\label{eq:dotP}
		|\dot P(t)|\leq 1, \quad \text{for a.e. } t\in[0,\tau].
	\end{equation}
	Let $\bar t\in[0,\tau]$ be such that $|P(\bar t)|=\b$ and consider the interval
	\[
	\displaystyle I:=\left[\bar t- \frac{\b}{2},\bar t+ \frac{\b}{2}\right]\subset \R.
	\]
	By $P(0) = P(\tau) = 0$ and  \eqref{eq:dotP}, we have $I \subset  [0,\tau]$ and $|P(t)| \geq\beta/2$ for all $t\in I $. Since $b$ is even we have that $\psi\geq 0$, and thus we conclude 
	\begin{equation}
		\label{eq:estimate_Sussmann_below}
		J\geq\int_{I} \psi(\w(t))dt\geq \frac{\b^2}{4}|I|= \frac{\beta^3}{8}.
	\end{equation}
	The proof of claim \eqref{eq:Jbelow} is now complete. 

	Combining the two estimates \eqref{eq:Jabove-meglio} and \eqref{eq:Jbelow} for $J$, we get

	\begin{equation}
		\tau \geq 2\e + \b\left(\frac18-2\tau\right).
	\end{equation}
Since by \eqref{eq:def-epsilon} we have $4\e< \frac{1}{8}$, and $\tau=L(\omega)\leq L(\gamma)=2\e$ we have that necessarily $\b=0$. Hence, the competing curve $\w$ is a re-parameterization of $\g$.   
\end{proof}

\section{Proof of Theorem \ref{thm:non-minimality}}

We next pass to the proof of Theorem \ref{thm:non-minimality}, which consists in showing that for all $\e>0$ there exists a competing curve for $\g_\e:=\g|_{[0,\e]}$ which is of shorter length. Such a competing curve is built by using a \textquotedblleft cut and correction" method. Similar arguments to prove the non-minimality of specific horizontal curves were used in \cite{HL16, LM08, S25}.

The competing curve consists of the concatenation of the following three pieces:
\begin{itemize}
	\item[i)] for $0<\rho,h<\e$, we \textquotedblleft cut" $\g_\e$ with the curve
	\begin{equation}
		\kappa:[0,2]\to \Omega, \quad \k(t):=
		\begin{cases}
			t(h,\rho), \quad &t\in [0,1],\\
			((2-t)h,t\rho), \quad &t\in[1,2].
		\end{cases}
	\end{equation} 
	\item[ii)] we continue from the point $(0,2\rho)$ by following $\g_\e$ until its end point, i.e., we run across the curve $\g|_{[2\rho,\e]}$;
	\item[iii)] finally, we add a small rectangle at the end-point and we run across its boundary. For a suitable $0<\de<\e$, let $E_\de:=(0,\e)+[0,\de]\times [0,\e\de]$ and let $\s$ be a clockwise parameterization of $\d E_\de$.
\end{itemize}

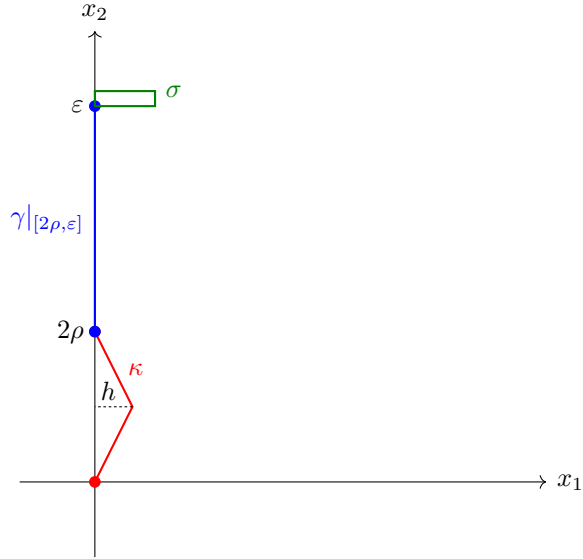
\begin{figure}[H]
    \centering
    \begin{tikzpicture}[scale=2]
        \draw[->] (-0.5,0) -- (3,0) node[right] {\footnotesize$x_1$};
        \draw[->] (0,-0.5) -- (0,3) node[above] {\footnotesize$x_2$};

        \draw[thick,red] (0,0) -- (0.25,0.5) -- (0,1);
        \filldraw[red] (0,0) circle (1pt);
        \filldraw[red] (0,1) circle (1pt);
        \node[right,red] at (0.15,0.75) {\footnotesize $\kappa$};

        \draw[thick,blue] (0,1) -- (0,2.5);
        \filldraw[blue] (0,1) circle (1pt);
        \filldraw[blue] (0,2.5) circle (1pt);
        \node[left,blue] at (0,1.75) {\footnotesize $\gamma|_{[2\rho,\e]}$};

        \draw[thick,deepgreen] (0,2.5) -- (0.4,2.5) -- (0.4,2.6) -- (0,2.6) -- cycle;
        \node[right,deepgreen] at (0.4,2.6) {\footnotesize $\sigma$};

        \draw[dashed, dash pattern=on 1pt off 1pt] (0.25,0.5) -- (0,0.5);
        
        \node[above] at (0.09,0.47) {\footnotesize $h$};

        \node[left] at (0,1) {\footnotesize $2\rho$};
        \node[left] at (0,2.5) {\footnotesize $\e$};

    \end{tikzpicture}
    \caption{The curve $\pi\circ\omega:=\kappa*\gamma|_{[2\rho, \e]}*\sigma$.}
    \label{fig:omega}
\end{figure}

We denote with $\omega$ the unique horizontal curve satisfying $\pi\circ\w=\kappa*\g|_{[\rho,\e]}*\s$, where the operation \textquotedblleft $*$" denotes the concatenation of plane curves. Notice that, by the choice of the metric $g$, the curve $\k$ is indeed shorter than $\gamma|_{[0,2\rho]}$.
For some $\rho,h \in(0,\e)$ and $\de>0$, the curve $\w$ might not be a competing curve for $\g_\e$.
The proof of Theorem \ref{thm:non-minimality}, consists in showing that, for all $\e>0$, it is possible to choose $\rho,h,\de$ in a way that $\w$ is a competing curve for $\gamma_\e$ and $L(\w)<L(\g_\e)$.

\begin{lemma}
	\label{lem:correction}
	 There exists $C,\xi>0$ such that, for every $\e>0$, and for every $\rho,h\in(0,\min\{\xi,\e\})$, there exists a unique $\delta>0$ such that the curve $\w$ is a competing curve for $\gamma_\e$. Moreover, for such $\delta$, there holds
	\begin{equation}
		\label{eq:correction}
		\de^{3}  \leq \frac{C}{\e^{b+1}} h^{2}\rho^{b+1}.
	\end{equation}
\end{lemma}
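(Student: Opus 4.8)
The plan is to reduce the requirement that $\w$ be a competing curve to a single scalar equation for $\de$, and then to solve that equation. The endpoint $\g_\e(\e)=(0,\e,0)$ has vanishing third coordinate, so by \eqref{eq:endpoint_final} the curve $\w$ is a competing curve for $\g_\e$ if and only if its third coordinate returns to $0$, that is, if and only if the line integral of the $1$-form $\mu:=\psi\,\df x_2=x_1^2x_2^b\,\df x_2$ along the plane curve $\pi\circ\w=\k*\g|_{[2\rho,\e]}*\s$ vanishes. I would therefore split this integral into the three contributions coming from $\k$, from $\g|_{[2\rho,\e]}$ and from $\s$. The middle piece lies on $\g$, where $x_1\equiv0$ and hence $\psi\equiv0$, so it contributes nothing.

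For the cut $\k$, integrating $\mu$ over its two legs gives
\[
\int_\k \mu = A_b\,h^2\rho^{b+1},\qquad A_b:=\frac{1}{b+3}+\int_0^1 s^2(2-s)^b\,\df s .
\]
The crucial feature is that $A_b>0$ and depends only on $b$: since $x_2\ge0$ throughout the construction and $\dot\k_2=\rho>0$ on both legs, the two legs contribute with the same positive sign, so the cut creates a defect of definite sign that the correction must cancel. For the correction $\s$, a clockwise loop around the rectangle $E_\de$, Green's theorem yields
\[
\int_\s \mu = -\int_{E_\de} \df\mu = -\int_{E_\de} 2x_1x_2^b\,\df x_1\,\df x_2 = -\frac{\e^{b+1}}{b+1}\,\de^2\big((1+\de)^{b+1}-1\big)=:-F(\de).
\]
Thus $\w$ is a competing curve for $\g_\e$ if and only if $F(\de)=A_b\,h^2\rho^{b+1}$.

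Now $F$ is continuous on $[0,\oo)$ with $F(0)=0$, is strictly increasing, and $F(\de)\to+\oo$ as $\de\to\oo$; hence the equation $F(\de)=A_b\,h^2\rho^{b+1}$ has a unique solution $\de>0$, which gives both existence and uniqueness. For the estimate I would invoke Bernoulli's inequality $(1+\de)^{b+1}\ge1+(b+1)\de$, which gives $F(\de)\ge\e^{b+1}\de^3$; combined with $F(\de)=A_b\,h^2\rho^{b+1}$ this produces
\[
\e^{b+1}\de^3\le A_b\,h^2\rho^{b+1},
\]
that is, \eqref{eq:correction} with $C:=A_b$. To obtain a uniform $\xi$ and to ensure $E_\de\subset M$ (so that $\w$ is a well-defined horizontal curve), I would feed $h,\rho<\min\{\xi,\e\}$ back into this inequality: distinguishing the cases $\e\le\xi$ and $\e>\xi$ one gets $\de^3\le A_b\,\xi^2$ uniformly in $\e$, so that a single sufficiently small choice of $\xi$ keeps $\de$ below any prescribed threshold, independently of $\e$.

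The step I expect to be most delicate is purely a matter of signs and orientation: one must fix the orientation conventions so that the clockwise loop $\s$ contributes with the sign opposite to that of $\k$ (otherwise the two contributions add up and no cancellation is possible), and one must check that the cut really produces a defect of a single definite sign, i.e.\ that $A_b>0$. Once these sign issues are settled, the existence and uniqueness of $\de$ is a monotonicity argument and the bound \eqref{eq:correction} is a one-line consequence of Bernoulli's inequality.
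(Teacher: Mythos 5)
Your proposal is correct and follows essentially the same route as the paper: reduce the competing-curve condition to the scalar equation (cut contribution) $=$ (rectangle contribution), compute the cut integral explicitly and the rectangle integral by Stokes/Green, solve by monotonicity of $\de\mapsto\de^2((1+\de)^{b+1}-1)$, and extract the bound from a lower estimate of that function (your Bernoulli step plays the role of the paper's bound $f(\de)\ge1$, and your constant $A_b$ equals the paper's $C_1$ after the substitution $s=2-t$). Your closing remark on feeding $h,\rho<\min\{\xi,\e\}$ back into the estimate to get $\de^3\le A_b\xi^2$ uniformly in $\e$ correctly accounts for the role of $\xi$.
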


\begin{proof}
	We have to compute and compare the effects on the third coordinate made by the cut and by the rectangle.
	
	For the cut we have
	\begin{equation}
		\label{eq:errk}
		\begin{split}
			\De^{\cut}_3(\rho)&:=\int_{0}^2 \k_1(t)^{2}\k_2(t)^b\dot\k_2(t)dt\\
			&=\int_{0}^{1} (th)^{2}(t\rho)^{b}\rho dt + \int_1^2 [h(2-t)]^{2}(t\rho)^{b}\rho dt\\
			&=C_1h^{2}\rho^{b+1},
		\end{split}
	\end{equation}
    where 
    \begin{equation}
        C_1:=\int_0^1t^{2+b} \df t+\int_1^2(2-t)^{2}t^b \df t.
    \end{equation}

	For the rectangle we use the Stokes theorem. We compute
	\begin{align}
			\De_3^{\cor}(\de)&:= \int_{\sigma} x_1^{2}x_2^b \df x_2\\
            &=-\iint_{E_\de} 2x_1x_2^b dx_1dx_2\\
			&=-\int_{\e}^{\e(1+\de)}\left(\int_0^{\de}2x_1x_2^bdx_1 \right)dx_2 \qquad (x_2=\e y, \; dx_2=\e dy)\\
			&=-\frac{\e^{b+1}}{b+1}\de^2((1+\delta)^{b+1}-1)\\
            &=-\frac{\e^{b+1}}{b+1}\de^3f(\delta), \label{eq:errQ}
	\end{align}
	where $f(\delta):=\frac{(1+\delta)^{b+1}-1}{\delta}$. The function $f$ is continuous, strictly increasing on $(0,+\infty)$, and $\lim_{\delta\to 0} f(\delta)=1$, thus there exists $\delta_1>0$ such that, for all $\delta\in (0,\delta_1)$, there holds 
    \begin{equation}
        \label{eq:costante1}
        0\leq f(\delta)-1\leq \frac{1}{2}.
    \end{equation}
    Moreover, letting $\xi_1:=f(\de_1)\de_1^3>0$, we have that $f(\delta)\delta^{3}\leq \xi_1$ if and only if $\delta\in (0,\delta_1]$. Finally, we can choose $\xi>0$ such that $\frac{(b+1)C_1}{\e^{b+1}}h^{2}\rho^{b+1}\leq \xi_1$ for every $h,\rho\leq \xi$.
    
The curve $\w$ is a competing curve for $\gamma_\e$ if and only if the third coordinate of $\w$ at the final point vanishes, that is, if and only if 
\begin{equation}
    \De^{\cut}_3(\rho)+\De_3^{\cor}(\de)=0.
\end{equation}
By~\eqref{eq:errk} and~\eqref{eq:errQ} the latter condition rewrites
\begin{equation}
\label{eq:condition-on-delta}
    \frac{C_1(b+1)}{\e^{b+1}}h^{2}\rho^{b+1}=\de^{3}f(\delta).
\end{equation}
Since the right-hand side is monotone in $\delta$, it vanishes at $\delta=0$ and tends to infinity as $\delta\to \infty$, there exists a unique $\delta$ for which
Equation \eqref{eq:condition-on-delta} holds. Moreover, by \eqref{eq:costante1}, \eqref{eq:condition-on-delta}, and by the choiche of $\xi$, we get that for such $\de$ the Inequality \eqref{eq:correction} holds with $C:=C_1(b+1)$.
\end{proof}

We are now ready to finish the proof of Theorem~\ref{thm:non-minimality}.

\begin{proof}[Proof of Theorem \ref{thm:non-minimality}]
Let $C,\xi>0$ be the constants for which Lemma \ref{lem:correction} holds. Without loss of generality, we may assume $\xi<1$. Fix $\e\in (0,1)$ and $\arbitraryconstant>0$ such that $4a+4+4\arbitraryconstant\leq b$ (e.g., $c=\frac18$). Moreover, fix $\rho\in(0,\min\{\xi,\e\})$ and $h:=\rho^{a+2+\arbitraryconstant}$, with $\rho$ satisfying
\begin{equation}
    \label{eq:rho-small}
    \rho^{\arbitraryconstant}< \min\left\{\frac14, \frac{1}{32}\left(\frac{\e^{b+1}}{C}\right)^{\frac{1}{3}}\right\}.
\end{equation}

By Lemma \ref{lem:correction} there exists $\delta>0$ satisfying \eqref{eq:correction} such that $\w$ is a competing curve for $\gamma_\e$. We claim that $\w$ is shorter than $\gamma_\e$, i.e., we claim that
\begin{equation}
    \label{eq:claim_on_length}
    L(\k)+L(\g|_{[2\rho,\e]})+L(\sigma)<\e.
\end{equation}

We start by estimating the length of $\kappa$:
	\begin{equation}
		\label{eq:lenk}
        \begin{split}
		L(\k)&=\int_0^1 \sqrt{h^2+(1-(th)(t\rho)^a)\rho^2}dt + \int_1^2 \sqrt{h^2 + (1-[(2-t)h](t\rho)^a)\rho^2}dt\\
        &\leq \int_0^1 \sqrt{h^2+\rho^2}dt + \int_1^{\frac{3}{2}} \sqrt{h^2 + \left(1-\frac{3}{4}h\rho^a\right)\rho^2}dt+ \int_{\frac{3}{2}}^2 \sqrt{h^2+\rho^2}dt 
        \\
        &=\rho\left(\frac{3}{2}\sqrt{1+\left(h\rho^\1\right)^2} + \frac{1}{2}\sqrt{1+\left(h\rho^\1\right)^2-\frac{3}{4}h\rho^a}\right)\\
        &\leq \rho\Big(2+\frac{1}{4}\left(\left(h\rho^\1\right)^2-\frac34 h\rho^a\right)\Big),
        \end{split}
	\end{equation}
    where in the last inequality we used that $\sqrt{1+|x|}\leq 1+\frac{|x|}{2}$, for all $x\in\R$. By \eqref{eq:rho-small} and $h=\rho^{a+2+\arbitraryconstant}$ we get $(h\rho^\1)^2\leq \frac14 h\rho^a$, and thus we deduce 
    \begin{equation}
    \label{eq:stima_L_k}
        L(\k)\leq 2\rho - \frac18 h\rho^{a+1}.
    \end{equation}    
    
The other two lengths in \eqref{eq:claim_on_length} are easier to estimate. Since $\dot\g(t)=(0,1)$, we have
	\begin{equation}
		\label{eq:lengamma}
L(\g|_{[2\rho,\e]})=\e-2\rho,
	\end{equation}    
while the length of the rectangle $\sigma$ is estimated by 
\begin{equation}
\label{eq:square}
   L(\s)\leq 4\de. 
\end{equation}

Combining Equations \eqref{eq:stima_L_k}, \eqref{eq:lengamma}, and \eqref{eq:square}, we get
\begin{eqnarray}  \label{eq:first_estimates_total_length}
L(\k)+L(\g|_{[2\rho,\e]})+L(\sigma)&\leq& \e + 4\delta-\frac{1}{8}h\rho^{a+1}\\
&\stackrel{\eqref{eq:correction}}{\leq}& \e + 4\left(\frac{C}{\e^{b+1}}\right)^{\frac{1}{3}}h^{\frac{2}{3}}\rho^{\frac{b+1}{3}}  -\frac{1}{8}h\rho^{a+1}.
\end{eqnarray}
Since $h=\rho^{a+2+\arbitraryconstant}$, and since $b\geq 4a+4+4\arbitraryconstant$, we get
\begin{eqnarray}
L(\k)+L(\g|_{[2\rho,\e]})+L(\sigma)&\leq&  \e + 4\left(\frac{C}{\e^{b+1}}\right)^{\frac{1}{3}}\rho^{\frac{2a+4+2\arbitraryconstant}{3}}\rho^{\frac{4a+5+4\arbitraryconstant}{3}}  -\frac{1}{8}\rho^{2a+3+\arbitraryconstant}\\
&=&\e+\rho^{2a+3+\arbitraryconstant}\left(4\left(\frac{C}{\e^{b+1}}\right)^{\frac{1}{3}}\rho^\arbitraryconstant-\frac{1}{8}\right)\stackrel{\eqref{eq:rho-small}}{<}\e.
\end{eqnarray}
The proof of the claim \eqref{eq:claim_on_length}, and then of Theorem \ref{thm:non-minimality}, is concluded.
\end{proof}

\bibliographystyle{plain}
\bibliography{Biblio}

\end{document}